\documentclass[11pt,a4paper,reqno]{amsart}
\usepackage[T1]{fontenc}
\usepackage[utf8]{inputenc}
\usepackage{lmodern}
\usepackage{amsmath,amssymb,amsthm,mathtools}
\usepackage[a4paper,textwidth=154mm,textheight=235mm,centering]{geometry}
\usepackage{microtype}
\usepackage{xcolor}
\definecolor{linkblue}{RGB}{30,64,105}
\usepackage[colorlinks=true,linkcolor=linkblue,citecolor=linkblue,urlcolor=linkblue]{hyperref}
\numberwithin{equation}{section}
\newtheorem{theorem}{Theorem}[section]
\newtheorem{lemma}[theorem]{Lemma}

\theoremstyle{remark}
\newcommand{\R}{\mathbb R}
\newcommand{\qf}{\mathfrak q}
\newcommand{\hf}{\mathfrak h}
\newcommand{\ip}[2]{\langle #1,#2\rangle}
\DeclareMathOperator{\Span}{span}
\DeclareMathOperator{\Rea}{Re}
\allowdisplaybreaks[1]
\newcommand{\beq}{\begin{equation} \begin{split}}
		\newcommand{\eeq}{\end{split} \end{equation}}

\newcommand\Omg{\Omega}
\newcommand\ext{{\rm ext}}

\makeatletter
\def\section{\@startsection{section}{1}\z@{.9\linespacing\@plus\linespacing}%
	{.7\linespacing} {\fontsize{13}{14}\selectfont\bfseries\centering}}
\def\paragraph{\@startsection{paragraph}{4}%
	\z@{0.3em}{-.5em}%
	{$\bullet$ \ \normalfont\itshape}}

\@addtoreset{equation}{section}
\makeatother

\renewcommand\and{\qquad\text{and}\qquad}

\newcommand{\comm}[1]{}

\def\bm1{\mathbbm{1}}

\def\C{\cC}

\def\p{\partial}

\def\arr{\rightarrow}

\def\tt{\theta}
\def\aa{\alpha}
\def\lm{\lambda}

\def\ii{{\mathsf{i}}}
\def\p{\partial}

\def\kp{\kappa}

\def\dd{{\,\mathrm{d}}}

\newcounter{counter_a}

\newcommand{\cf}{{\it cf.}\,}

\numberwithin{figure}{section}
\numberwithin{equation}{section}
\theoremstyle{plain}
\newtheorem*{thm*}{Theorem}
\newtheorem{thm}{Theorem}[section]

\theoremstyle{remark}
\newtheorem{remark}[thm]{Remark}
\theoremstyle{plain}

\newcommand{\beu}{\begin{equation*}}
	\newcommand{\eeu}{\end{equation*}}
\newcommand{\besu}{\begin{equation*}
		\begin{aligned}}
		\newcommand{\eesu}{\end{aligned}
\end{equation*}}
\newcommand{\bes}{\begin{equation}
		\begin{aligned}}
		\newcommand{\ees}{\end{aligned}
\end{equation}}

\newcommand\cB{\mathcal B}

\newcommand\frh{\mathfrak h}

\newcommand\eps{\varepsilon}

\newcommand\wh{\widehat}

\newcommand\void[1]{}

\def\eps{\varepsilon}

      \def\dC{{\mathbb C}}

      \def\dR{{\mathbb R}}
\def\dS{{\mathbb S}}

   \def\cB{{\mathcal B}}

\newcommand{\dom}{\mathrm{dom}\,}

\renewcommand{\C}{\mathbb C}

\usepackage[normalem]{ulem}
\definecolor{DarkGreen}{rgb}{0,0.5,0.1} 
\definecolor{DarkBlue}{rgb}{0,0.1,0.7} 

\newcommand\soutD{\bgroup\markoverwith
{\textcolor{DarkGreen}{\rule[.5ex]{2pt}{1pt}}}\ULon}
\newcommand\soutP{\bgroup\markoverwith
{\textcolor{blue}{\rule[.5ex]{2pt}{1pt}}}\ULon}
\newcommand{\Hm}[1]{\leavevmode{\marginpar{\tiny%
$\hbox to 0mm{\hspace*{-0.5mm}$\leftarrow$\hss}%
\vcenter{\vrule depth 0.1mm height 0.1mm width \the\marginparwidth}%
\hbox to
0mm{\hss$\rightarrow$\hspace*{-0.5mm}}$\\\relax\raggedright #1}}}

\title[Optimization of the second Robin eigenvalue on a planar exterior domain]{Optimization of the second Robin eigenvalue on a planar exterior domain under fixed area constraint}
\author[D.\ Krej\v{c}i\v{r}\'{i}k]{David Krej\v{c}i\v{r}\'{i}k}
\address{(D.\ Krej\v{c}i\v{r}\'{i}k) Department of Mathematics, Faculty of Nuclear Sciences and Physical Engineering, Czech Technical University in Prague, Trojanova 13, 120 00, Prague, Czech Republic}
\email{david.krejcirik@fjfi.cvut.cz}

\author[V.\ Lotoreichik]{Vladimir Lotoreichik}
\address{(V.\ Lotoreichik) Department of Mathematics, Faculty of Nuclear Sciences and Physical Engineering, Czech Technical University in Prague, Trojanova 13, 120 00, Prague, Czech Republic}
\email{vladimir.lotoreichik@fjfi.cvut.cz}
\keywords{Robin Laplacian, exterior domain, second eigenvalue, shape optimization, convexity, parallel coordinates}

\date{25 October 2026}
\begin{document}
\begin{abstract}
We consider the Robin Laplacian on a planar
exterior domain with a negative boundary parameter. For a negative boundary parameter of a sufficiently large absolute value this operator
has at least two negative eigenvalues below the bottom of the essential spectrum.
We prove that under a fixed area constraint and convexity assumption on the bounded complement
of the exterior domain the second Robin eigenvalue is maximized by the exterior of the disk.	
The inequality is strict for every exterior of a non-circular domain whenever
the comparison disk has a negative second eigenvalue. The proof modifies the
parallel-coordinate method by allowing the angular phase of a trial function
to vary with the distance from the boundary.  This result resolves the first item in~\cite[Conjecture 1.2]{KL24} by the authors  under an additional convexity assumption. 
Relaxing the convexity and the perimeter constraint 
remain open problems.
\end{abstract}
\maketitle

\section{Introduction and main result}
Optimization of eigenvalues with respect 
to the shape of the domain is one of the central topics in spectral geometry. 
The most classical result in this area is 
the hundred-year-old Faber--Krahn inequality stating 
that the ball minimizes the first Dirichlet eigenvalue 
under fixed volume constraint.
Changing the boundary conditions makes the optimization 
problem much younger and still unresolved in several aspects.

The present paper is concerned with the Robin problem
\begin{equation}\label{problem}
\begin{cases}
-\Delta u=\lambda u,&\text{in}\,\,\Omega, \\
\partial_\nu u+\alpha u= 0,&\text{on}\,\, \partial\Omega,
\end{cases}
\end{equation}
where $\Omg\subset\dR^d$ with $d\ge2$ 
is a Lipschitz domain, $\nu$ is its outward unit normal
and~$\alpha$ is a real parameter. 
For positive~$\alpha$, 
the fact that the ball is the minimizer of the lowest eigenvalue 
among all bounded domains of fixed volume 
was proved by Bossel~\cite{Bossel} in 1986 in the planar case
and extended by Daners~\cite{Daners}  in 2006 to higher dimensions. 

The case of negative~$\alpha$ remains mysterious.
For $\alpha<0$, 
Bareket~\cite{Bareket} conjectured in 1977 
that the disk instead maximizes the lowest Robin eigenvalue under fixed area constraint for planar domains. 
However, it turned out in 2015 \cite{FK}
that annuli, and higher-dimensional spherical shells, outperform the equal-volume ball for sufficiently negative~$\alpha$. The corrected conjecture~\cite{AFK} involves simply-connectedness assumption in two dimensions. Freitas and the first author proved in~\cite{FK} the disk maximality for smooth planar domains when $\alpha$ is sufficiently close to zero, uniformly at fixed area; local maximality results also support the conjecture \cite{FNT}. Optimality of the ball for the lowest eigenvalue in the weak coupling regime and under fixed volume constraint was extended in~\cite{GKP} to convex domains in every dimension, with the threshold depending only on volume and dimension. Global validity for every $\alpha<0$ remains open for simply connected planar domains and for convex domains in higher dimensions. Under a different constraint, fixed perimeter, disk maximality for planar domains and ball maximality among convex sets hold for all $\alpha<0$ \cite{AFK,BFNT}. The optimization results under fixed perimeter constraint are extended in~\cite{AGM24} to the Robin $p$-Laplacian.

For the second Robin eigenvalue 
$\lambda_2^\aa(\Omega)$ of~\eqref{problem},
Freitas and Laugesen~\cite{FLW} in 2021 
established the following bound 
\[
\lambda_2^\aa(\Omega)\leq\lambda_2^\aa(\cB_R),
\qquad -\frac{d+1}{dR}\leq\alpha<0,
\]
where~$\Omega$ is bounded
and~$\cB_R$ denotes the ball of radius $R>0$
such that $|\cB_R|=|\Omega|$;
moreover, the equality holds only for the balls.
While it is known that the
ball maximality fails for sufficiently negative $\alpha$, the full transition and subsequent optimizers remain unresolved. Optimization results for the second Robin eigenvalue with negative boundary parameter in a more restricted interval of boundary parameters with boundary parameter scaled by the perimeter in the case of planar domains are obtained in~\cite{FLS} using the method of conformal maps. These considerations are extended to the third Robin eigenvalue for the negative boundary parameter in a certain area-dependent interval in~\cite{GL}. The latter result on the third Robin eigenvalue is also extended to an area-dependent interval of positive boundary parameters in \cite{KL}. The optimization of the second Robin eigenvalue with negative boundary parameter on domains in constant curvature manifolds was addressed in~\cite{LL25}.

The negative boundary parameter makes the optimization problem for the Robin eigenvalues meaningful also on exterior domains. Let us discuss this setting in more detail.
Let $\Omega\subset\R^2$ be a bounded simply-connected 
smooth domain. The exterior domain
$\Omega^{\ext}:=\R^2\setminus\overline\Omega$ is the complement of $\Omg$. It is an unbounded domain with compact boundary. 
Negative Robin boundary
conditions give rise to the existence of finitely many negative eigenvalues for the Robin Laplacian on $\Omega^{\ext}$, while 
the essential spectrum of this operator coincides with $[0,\infty)$. The authors of the present paper
established that the disk is a maximizer for the first Robin eigenvalue on exterior 
planar domains
\cite{KL18,KL20} under both fixed area and fixed perimeter constraints.
The paper~\cite{KL18} dealt with convex domains, while the convexity assumption was removed later in~\cite{KL20}. These optimization results for the lowest Robin eigenvalue on planar exterior domains are generalized in~\cite{BGS26} to the Robin $p$-Laplacian. 

We subsequently conjectured the corresponding
fixed-area and fixed-perimeter 
optimality of the disk
for the second eigenvalue, too~\cite[Conjecture~1.2]{KL24}. In the same paper, we proved an isoperimetric inequality for the second eigenvalue under a fixed boundary elastic energy constraint and under additional assumption on the convexity of $\Omg$, where the proof was based on an improvement of the method developed in~\cite{EL22}. We remark that in space dimensions $d\ge3$ the exterior of the ball is not a global maximizer of the lowest Robin eigenvalue under fixed volume or perimeter constraints~\cite{KL18}. It becomes a global maximizer in the class of exteriors of bounded convex domains under a constraint involving the mean curvature~\cite{KL20}. However, locally the ball is still a maximizer for the lowest eigenvalue under fixed volume constraint in all space dimensions~\cite{B25}.
Exterior domains can be viewed as an extreme case of doubly-connected domains, for which the optimization of the Robin eigenvalues was analyzed in~\cite{ABD25}; see also the references therein.  

The present paper
positively resolves the fixed-area conjecture of \cite{KL24},
at least for convex domains. We employ
the method of~\cite{KL24} based on the construction of a two-dimensional trial space by means of parallel coordinates on $\Omg^\ext$. This
trial space is a span of two trial functions.
The first trial function remains independent of the boundary coordinate as in~\cite{KL24}, but
now uses the radial profile of the second eigenfunction for the exterior of the disk instead of the first eigenfunction for the exterior of the disk. The second trial function employs the same radial profile as in~\cite{KL24}, but uses a
distance-dependent angular phase instead of distance-independent angular phase used in~\cite{KL24}. This idea of distance-dependent angular phase was suggested by ChatGPT 6 Astra.

To precisely formulate the main result of this paper,
let us consider
the closed, densely defined, symmetric and semi-bounded sesquilinear form
\begin{equation}\label{eq:form}
 \qf_\alpha^{\Omg^\ext}[u,v]
 =\int_{\Omega^{\ext}}\nabla u\cdot\overline{\nabla v}\dd x
   +\alpha\int_{\partial\Omega}u\overline v\dd s,
 \qquad \dom\qf_\alpha^{\Omg^\ext} =  H^1(\Omega^{\ext}),
\end{equation}
{which is naturally associated with the problem~\eqref{problem}
in the Hilbert space $L^2(\Omg^\ext)$. 
Writing 
$\qf_\alpha^{\Omg^\ext}[u]=\qf_\alpha^{\Omg^\ext}[u,u]$, 
the associated second min--max level is 
\begin{equation}\label{eq:minmax}
 \lambda_2^\alpha(\Omega^{\ext})
 =\inf_{\substack{\mathcal L\subset H^1(\Omega^{\ext})\\
                         \dim\mathcal L=2}}
   \ \sup_{0\ne u\in\mathcal L}
             \frac{\qf_\alpha^{\Omg^\ext}[u]}{\|u\|_{L^2(\Omega^{\ext})}^2}.
\end{equation}
The self-adjoint operator in $L^2(\Omg^\ext)$ associated with $\qf_\alpha^{\Omg^\ext}$ is the Robin Laplacian $-\Delta_\aa^{\Omg^\ext}$ on $\Omg^\ext$.
The essential spectrum of $-\Delta_\aa^{\Omg^\ext}$ is given by
$[0,\infty)$. The second min-max level \eqref{eq:minmax} equals zero when there are fewer than
two negative eigenvalues; see \cite[Section~2]{KL18} and
\cite[Section~2.1]{KL24}. In particular,
$\lambda_2^\alpha(\Omega^{\ext})\le0$ for every $\alpha\in\R$. We remark that there exists a critical constant $\alpha_\star(\Omega^\ext) <0$ such that $\lm_2^\alpha(\Omg^\ext) < 0$ if, and only if, $\aa < \aa_\star(\Omg^\ext)$; \cf~\cite[Section~2.1]{KL24}.

Our main result is 
the following fixed-area
inequality for the second eigenvalue of the Robin Laplacian on the planar exterior domain $\Omg^\ext$.
\begin{theorem}\label{thm:main}
Let $\Omega\subset\R^2$ be a bounded convex domain with $C^\infty$-smooth boundary, and
let $\cB_R$ be a disk of radius $R$ satisfying $|\Omega|=|\cB_R|$. Then, for every
$\alpha\in\R$,
\begin{equation}\label{eq:main}
 \lambda_2^\alpha(\Omega^{\ext})
 \le \lambda_2^\alpha(\cB_R^{\ext}).
\end{equation}
If $\alpha<-1/R$, equality in \eqref{eq:main} holds if and only if $\Omega$
is a disk, up to translation.
\end{theorem}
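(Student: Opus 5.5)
We would follow the trial-space strategy described in the introduction: use parallel coordinates on $\Omega^\ext$ and plug a two-dimensional trial space into the min--max characterization \eqref{eq:minmax}.

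\emph{Reduction and setup.} Since $\lambda_2^\alpha(\Omega^\ext)\le 0$ always, \eqref{eq:main} is trivial whenever $\lambda_2^\alpha(\cB_R^\ext)=0$, that is, for $\alpha\ge\alpha_\star(\cB_R^\ext)$. So we assume $\lambda:=\lambda_2^\alpha(\cB_R^\ext)<0$. Separation of variables on $\cB_R^\ext$ shows that the second eigenvalue is attained in angular mode $m=1$, with eigenfunctions $\psi(r)\cos\theta$ and $\psi(r)\sin\theta$. Here $\psi$ is a modified Bessel profile $K_1(\sqrt{-\lambda}\,r)$, and one checks that $\lambda<0$ forces $\alpha<-1/R$. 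The first eigenfunction has profile $\phi=K_0(\sqrt{-\lambda_1}\,r)$.

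Let $L=|\partial\Omega|$ and let $s\in[0,L)$ be arc length. Convexity makes the map $(s,t)\mapsto x(s)+t\nu(s)$ a global diffeomorphism $[0,L)\times(0,\infty)\to\Omega^\ext$, with Jacobian $1+\kappa(s)t$. We use $\int\kappa\,ds=2\pi$ and Steiner's formula. Write $\theta(s)$ for the tangent angle, so $\theta'=\kappa$. The isoperimetric inequality gives $L\ge 2\pi R$, and the transplanted profiles are evaluated at $r=R+t$.

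\emph{Trial functions.} Following the introduction, we take two trial functions.
\begin{itemize}
\item $u_1(s,t)=\psi(R+t)$, which is independent of $s$ and uses the second radial profile.
\item $u_2(s,t)=\psi(R+t)\,e^{\ii\Theta(s,t)}$, with a distance-dependent phase. We choose $\Theta(s,t)$ so that $\partial_s\Theta$ equals $\ell(t)(1+\kappa t)/|\{\text{parallel curve at distance }t\}|$, where the length of the parallel curve is $L+2\pi t$.
\end{itemize}
The point of this choice is that $e^{\ii\Theta}$ winds once around every parallel curve with uniform speed relative to that curve's arc length. The angular energy density is then $(2\pi)^2/(L+2\pi t)^2$, which is at most $1/(R+t)^2$ because $L\ge 2\pi R$. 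Hence $\Theta(s,t)=2\pi\,\frac{\int_0^s(1+\kappa t)}{L+2\pi t}$. Since $\Theta$ now depends on $t$, $\partial_t\Theta\ne0$ contributes an extra term to the normal derivative, and we must control it.

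Rather than taking the span of $u_1$ and the real and imaginary parts of $u_2$, we use the two-dimensional real span of $\Rea u_2$ and $\Rea(\ii u_2)$. Phase rotation invariance makes the Rayleigh quotient of the complex function the natural object, so it suffices to bound $\qf[u_2]/\|u_2\|^2\le\lambda$ in a suitable averaged sense. A cleaner route is to take $\mathcal L=\Span\{\psi\cos\Theta,\psi\sin\Theta\}$: the cross terms average out by $s$-periodicity, so the maximum of the Rayleigh quotient over $\mathcal L$ is controlled by the sum $\qf[\psi\cos\Theta]+\qf[\psi\sin\Theta]$ divided by the sum of the norms. This sum equals $\int\!\!\int\bigl(\psi'^2(1+|\partial_t\Theta|^2\psi^2/\psi'^2)+\psi^2|\partial_s\Theta|^2/(1+\kappa t)^2\bigr)(1+\kappa t)\,ds\,dt+\alpha L\psi(R)^2$.

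\emph{Comparison.} We compare with the disk identity
\[
\int_R^\infty\bigl(\psi'^2+\psi^2/r^2\bigr)r\,dr+\alpha R\psi(R)^2=\lambda\int_R^\infty\psi^2 r\,dr .
\]
The $t$-integrals become weighted by $L+2\pi t$ instead of $2\pi(R+t)$. We write $L+2\pi t=2\pi(R+t)+(L-2\pi R)$ and show that the excess part contributes something $\le \lambda\times$(excess $L^2$ mass). This uses $\int_0^\infty\bigl(\psi'^2-\lambda\psi^2\bigr)dt+\alpha\psi(R)^2\le0$, a one-dimensional Robin inequality. It should follow from monotonicity of $-\psi'/\psi$ and from $\alpha<-1/R$, exactly as in \cite{KL24}.

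\emph{Main obstacle.} The hardest step is the $\partial_t\Theta$ term. We have $\partial_t\Theta=2\pi\,\frac{(L+2\pi t)\int_0^s\kappa-2\pi\int_0^s(1+\kappa t)}{(L+2\pi t)^2}$, which vanishes identically only for disks. The plan is to integrate $|\partial_t\Theta|^2(1+\kappa t)$ in $s$ and bound it by a multiple of $(L-2\pi R)$-type quantities, or better by $L^2-4\pi|\Omega|$ through a Wirtinger-type estimate on $\int_0^s(\kappa-2\pi/L)$. We then absorb it into the gain coming from the angular term, where $(2\pi)^2/(L+2\pi t)^2<1/(R+t)^2$ strictly, and from the radial excess.

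If the balance fails, we mix in $u_1$ and optimize over the three-dimensional space, taking the second min--max level. Equality then forces $L=2\pi R$, so $\Omega$ is a disk by the isoperimetric inequality. Strictness uses $\alpha<-1/R$, which guarantees $\lambda<0$ is achieved and makes $\psi$ nontrivial.
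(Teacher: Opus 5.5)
\textbf{There is a genuine gap: the trial space you finally choose cannot work.} To avoid a clash of letters, I write $g$ for your radial profile $K_1(k(R+t))$. I keep $\psi=\theta-s/p+c$, with $\int_0^L\psi\dd s=0$ and $L=2\pi p$, for the centered angle, and set $C=\frac{p}{2\pi}\int_0^L\psi^2\dd s$.

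You drop the radial function and take $\mathcal L=\Span\{g\cos\Theta,g\sin\Theta\}$, asserting that the maximal Rayleigh quotient on $\mathcal L$ is controlled by $\qf[u_2]/\|u_2\|^2$. This is false. Since $\p_s\Theta=w/(p+t)$ with $w=1+\kappa t$, one has $\int_0^Le^{2\ii\Theta}w\dd s=0$. So the Gram matrix is scalar, and the $L^2$ and boundary parts of the cross terms vanish. The gradient parts do not. With $\hf=\qf-\lambda\|\cdot\|^2$, the maximum of the Rayleigh quotient over $\mathcal L$ equals $\lambda+(\hf[u_2]+|\beta|)/\|u_2\|^2$, where $\beta$ is the \emph{non-conjugated} form of $u_2$ with itself. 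Your argument controls only the average $\hf[u_2]$. Differentiating $\int_0^Le^{2\ii\Theta}w\dd s=0$ in $t$ gives
\[
\beta=-\int_0^\infty gg'J_2\dd t-\int_0^\infty g^2\int_0^L(\p_t\Theta)^2e^{2\ii\Theta}w\dd s\dd t,\qquad J_2(t)=\int_0^L\kappa e^{2\ii\Theta}\dd s.
\]
Because $\int_0^L\kappa e^{2\ii\theta}\dd s=0$ but $\int_0^L\kappa\psi e^{2\ii\theta}\dd s=\frac{1}{2\ii p}\int_0^Le^{2\ii\theta}\dd s$, the first-order part of $J_2$ is $-\frac{1}{p+t}\int_0^Le^{2\ii\theta}\dd s$. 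This is a multiple of the second Fourier coefficient of the radius of curvature. So $|\beta|$ is linear in the deviation from the disk, whereas $-\hf[u_2]$ is only of the order of $p-R$ and $C$, which are quadratic. For a slightly eccentric ellipse of area $\pi R^2$, the maximum on your $\mathcal L$ therefore exceeds $\lambda$. Spectrally, $\mathcal L$ approximates the span of the second and third eigenfunctions, and the degenerate $m=\pm1$ level of the disk splits at first order.

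\textbf{What the paper does instead.} It keeps the radial function and uses $\Span_{\mathbb C}\{g,ge^{\ii\chi}\}$, with the centered phase $\chi=\frac sp+\frac{t}{p+t}\psi$. There the only coupling is $b=\hf[ge^{\ii\chi},g]=-\int_0^\infty gg'J\dd t$ with $J=\int_0^L\kappa e^{\ii\chi}\dd s$. For the first harmonic, both the constant and the linear terms cancel by closure: $\int_0^L\kappa e^{\ii\theta}\dd s=\int_0^L\kappa\psi e^{\ii\theta}\dd s=0$. Hence $|J|\le\pi C/(p+t)^2$ and $|b|\le2\pi(p-R)\int_0^\infty\frac{-gg'}{R+t}\dd t\le-\hf[ge^{\ii\chi}]$. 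Since also $\hf[g]<\hf[ge^{\ii\chi}]$, the $2\times2$ form is negative definite. Your fallback three-dimensional space contains this complex two-dimensional space, so it could be made to work. It would need exactly this quadratic cancellation, which your proposal does not identify.

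\textbf{Three further steps need repair.}

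First, your phase has a fixed angular origin: $\Theta=\chi-\frac{tc}{p+t}$. So $\p_t\Theta=p(\psi-c)/(p+t)^2$ and $\int_0^L(\p_t\Theta)^2w\dd s=2\pi(C+p^2c^2)/(p+t)^3$. The diagonal estimate would then need $C+p^2c^2\le p^2-R^2$, which fails in general. You must center the phase, for instance by putting the base point at a zero of $\psi$.

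Second, the sharp bound $C\le p^2-R^2$ comes from comparing $\frac12\sum_{n\ge2}\frac{a_n^2+b_n^2}{n^2}$ with $\frac12\sum_{n\ge2}\frac{a_n^2+b_n^2}{n^2-1}$. Here $a_n,b_n$ are the Fourier coefficients of the radius of curvature, and the second sum comes from the support-function area formula. A generic Wirtinger estimate does not give it.

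Third, the one-dimensional inequality you quote omits the angular term. For the excess weight $p-R$ in the angular trial function you need
\[
\int_0^\infty\Bigl(g'^2+\frac{g^2}{(R+t)^2}-\lambda g^2\Bigr)\dd t+\alpha g(0)^2=\int_0^\infty\frac{gg'}{R+t}\dd t<0,
\]
which follows by multiplying the radial equation by $g$ and integrating by parts.
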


For $\alpha\ge-1/R$, the right-hand side of \eqref{eq:main} is zero.
The substantive part of the theorem is therefore the range $\alpha<-1/R$.
The proof will construct a two-dimensional space on which
$\qf_\alpha^{\Omg^\ext}-\lambda_2^\alpha(\cB_R^{\ext})\|\cdot\|^2_{L^2(\Omg^\ext)}$ is strictly negative when $\Omg$ is not a disk. We remark that an isoperimetric
inequality $\aa_\star(\Omg^\ext)\ge \aa_\star(\cB_R^\ext)$ under the constraint $|\p\Omg| = |\p\cB_R|$ was obtained in~\cite{BGGLP26} and
stated in an equivalent form of the Weinstock inequality for the Steklov problem on $\Omg^\ext$.

\subsection*{Structure of the paper}
In Section~\ref{sec:identities}, we provide useful identities for the second eigenfunction on the exterior of the disk. In Section~\ref{sec:geomineq}, we prove a geometric inequality, which is used in the proof of our main result.  Parallel coordinates and trial functions are introduced in Section~\ref{sec:trial}. The central point is the choice of a suitable phase function in the construction of the second trial function.
Estimates of the sesquilinear form evaluated on these trial functions are given in Section~\ref{sec:estimates}. Finally, the main result is proved in Section~\ref{sec:proof}.
 The proof is accompanied by the discussion of related results and the necessity of the assumptions in the main theorem.

\section{Useful identities for the radial profile of the second eigenfunction on the exterior of the disk}\label{sec:identities}
%

The spectral analysis for the Robin Laplacian in the exterior of the disk is performed in~\cite[Section 2.2]{KL24}. Based on this analysis (see~\cite[Proposition 2.4]{KL24}) we have
\begin{equation}\label{eq:disk-threshold}
 \lambda_2^\alpha(\cB_R^{\ext})<0
 \quad\Longleftrightarrow\quad \alpha<-\frac1R.
\end{equation}
In this range $\aa < -\frac1R$, put
\begin{equation}\label{eq:g}
 \lambda:=\lambda_2^\alpha(\cB_R^{\ext})=-k^2,
 \qquad g(t):=K_1(k(R+t)),\qquad t\ge0,
\end{equation}
where $k > 0$ is the unique solution of the transcendental equation
\begin{equation}\label{eq:bessel-boundary}
	kK_1'(kR)=\alpha K_1(kR).
\end{equation}
We remark that the eigenspace of the Robin Laplacian on $\cB^\ext_R$ with $\aa < -\frac1R$ corresponding to the eigenvalue $\lm_2^\aa(\cB_R^\ext)$ is given by the span of the functions $K_1(k(R+t))e^{\pm\ii\tt}$, where $t$ is the distance to the boundary of the disk and $\tt$ is the angular variable in polar coordinates with the origin at the center of the disk.
In particular, $g$ satisfies
\begin{equation}\label{eq:ode}
 -g''-\frac{g'}{R+t}+\frac{g}{(R+t)^2}=\lambda g,
 \qquad g'(0)=\alpha g(0),
\end{equation}
where we used the expression for the Laplacian in polar coordinates and that $K_1(k(R+t))e^{\pm\ii\tt}$
is an eigenfunction  of the Robin Laplacian on $\cB_R^\ext$.
The positivity, monotonicity and exponential decay of $K_1$ give
\begin{equation}\label{eq:g-properties}
 g(t)>0,\qquad g'(t)<0,\qquad
 g(t),\,g'(t)=O\bigl(e^{-kt}(1+t)^{-1/2}\bigr)
 \quad(t\to\infty).
\end{equation}
For example, the identity $K_1'(z)=-K_0(z)-K_1(z)/z<0$ for $z>0$ implies that $g'(t) < 0$.
The Bessel identities and asymptotics used here can be found in
\cite[Sections~10.29 and~10.40]{DLMF}.

\begin{lemma}\label{lem:radial}
The function in \eqref{eq:g} satisfies
\begin{align}
 &\int_0^\infty
  \left((g')^2+\frac{g^2}{(R+t)^2}-\lambda g^2\right)(R+t)\dd t
  +\alpha Rg(0)^2=0,\label{eq:radial-weighted}\\
 &\int_0^\infty
  \left((g')^2+\frac{g^2}{(R+t)^2}-\lambda g^2\right)\dd t
  +\alpha g(0)^2
  =\int_0^\infty\frac{gg'}{R+t}\dd t<0.
  \label{eq:radial-unweighted}
\end{align}
\end{lemma}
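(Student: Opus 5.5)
Both identities come from testing the ODE \eqref{eq:ode} against $g$, with two different weights, and integrating by parts. The boundary terms at infinity vanish by the decay in \eqref{eq:g-properties}.

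For \eqref{eq:radial-weighted}, I would multiply \eqref{eq:ode} by $(R+t)g$. The first two terms combine into a divergence:
\[
-\bigl((R+t)g'\bigr)'\,g+\frac{g^2}{R+t}=\lambda(R+t)g^2 .
\]
Integrating over $(0,\infty)$ and integrating the first term by parts gives
\[
-\bigl[(R+t)g'g\bigr]_0^\infty+\int_0^\infty (R+t)(g')^2\dd t .
\]
The term at infinity vanishes by \eqref{eq:g-properties}. The term at zero contributes $Rg'(0)g(0)=\alpha R g(0)^2$ by the Robin condition $g'(0)=\alpha g(0)$. Rearranging gives \eqref{eq:radial-weighted}. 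All integrals converge absolutely because of the exponential decay.

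For \eqref{eq:radial-unweighted}, I would multiply \eqref{eq:ode} by $g$ without the weight:
\[
\int_0^\infty\Bigl(-g''g-\frac{g'g}{R+t}+\frac{g^2}{(R+t)^2}-\lambda g^2\Bigr)\dd t=0 .
\]
Integrating by parts, $-\int_0^\infty g''g\,\dd t=g'(0)g(0)+\int_0^\infty(g')^2\dd t$, and again $g'(0)g(0)=\alpha g(0)^2$. Moving the term $-\int_0^\infty\frac{gg'}{R+t}\dd t$ to the right-hand side yields the claimed equality.

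The strict negativity of the right-hand side follows from $g>0$ and $g'<0$ in \eqref{eq:g-properties}. The integrand $gg'/(R+t)$ is then strictly negative on $[0,\infty)$, and integrable since $R+t\ge R>0$.

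The only step needing care is justifying the vanishing boundary terms at infinity and the integrability of every term. Both follow from $g,g'=O(e^{-kt}(1+t)^{-1/2})$ with $k>0$: the products $(R+t)gg'$ and $gg'$ tend to $0$, and all integrands are exponentially decaying. Everything else is routine.
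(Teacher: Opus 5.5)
Your proposal is correct and takes the same approach as the paper's proof. You test \eqref{eq:ode} against $(R+t)g$ and against $g$, integrate by parts using the Robin condition $g'(0)=\alpha g(0)$ at $t=0$ and the decay \eqref{eq:g-properties} at infinity, and read off the sign from $g>0$, $g'<0$.
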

\begin{proof}
Multiplying \eqref{eq:ode} by $(R+t)g$ and integrating, the two derivative
terms combine as
\[
 \int_0^\infty\bigl(-(R+t)g''-g'\bigr)g\dd t
 =\int_0^\infty(R+t)(g')^2\dd t+Rg(0)g'(0).
\]
This proves \eqref{eq:radial-weighted}. Without the weight, integration by
parts gives
\[
 \int_0^\infty(-g'')g\dd t
 =\int_0^\infty (g')^2\dd t+g(0)g'(0)= \int_0^\infty (g')^2\dd t+\aa g(0)^2.
\]
Using \eqref{eq:ode} we get
\[
\begin{aligned}
&0=\int_0^\infty \left( -g''-\frac{g'}{R+t}+\frac{g}{(R+t)^2}-\lambda g\right)g\dd t\\
&\qquad = 
\int_0^\infty \left( (g')^2-\frac{g'g}{R+t}+\frac{g^2}{(R+t)^2}-\lambda g^2\right)\dd t+\aa g(0)^2
\end{aligned}
\]
from which the identity 
\eqref{eq:radial-unweighted} follows. We remark that  every boundary term at infinity vanishes by
\eqref{eq:g-properties}. The final sign of~\eqref{eq:radial-unweighted} follows from $g>0$ and $g'<0$.
\end{proof}
\section{An angular variance controlled by the area deficit}
\label{sec:geomineq}
 Let us introduce for a bounded  convex $C^\infty$-smooth domain $\Omg\subset\dR^2$  the parameters $L$, $p$ and $R$ such that
\begin{equation}\label{eq:pR}
 L=|\partial\Omega|=2\pi p,\qquad |\Omega|=\pi R^2.
\end{equation}
Let $\gamma\colon[0,L]\to\partial\Omega$ be a counterclockwise unit-speed
parametrization of the boundary of $\Omg$. For the sake of convenience we extend the function $\gamma$ periodically to the whole real line. The outward normal and tangent for $\p\Omg$, represented upon identification of $\dR^2$ and $\dC$, are $\nu=e^{\ii\theta}$ and $\tau=\ii e^{\ii\theta}$ for a certain smooth function $\tt \colon \dR\arr \dR$ of the arc-length parameter with the normalization $\tt(0) = 0$. This normalization can be achieved by a suitable rotation of the domain $\Omg$.  Here we implicitly assume that $\nu$ and $\tau$ are also extended periodically.
With the convention of non-negative curvature $\kp$ of the boundary for a convex
domain $\Omg$, we get
\begin{equation}\label{eq:geometry}
 \theta'=\kappa\ge0,\qquad
 \theta(L)=2\pi,\qquad
 \nu'=\kappa\tau,\qquad \gamma'=\tau,
\end{equation}
where we extend the curvature also periodically to the whole $\mathbb{R}$.
Choose the constant $c\in\dR$ so that
\begin{equation}\label{eq:psi}
 \psi(s):=\theta(s)-\frac{s}{p}+c,
 \qquad \int_0^L\psi(s)\dd s=0,
 \end{equation}
 and  set
 \[
 C:=\frac p{2\pi}\int_0^L\psi(s)^2\dd s.
 \]
The function $\psi$ is periodic, and
\begin{equation}\label{eq:psi-derivative}
 \psi'=\kappa - \frac{1}{p},
\end{equation}
\begin{lemma}\label{lem:variance}
For every bounded convex $C^\infty$-smooth planar domain,
\begin{equation}\label{eq:variance}
C\le p^2-R^2.
\end{equation}
\end{lemma}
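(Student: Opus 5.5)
The plan is to work in the normal-angle parametrization of $\partial\Omega$ and reduce \eqref{eq:variance} to a Fourier (Wirtinger-type) comparison. Convexity is used only to pass to this parametrization.

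\emph{Step 1: change of variables.} Since $\Omega$ is convex, the normal angle $\theta$ is a nondecreasing function of $s$ by \eqref{eq:geometry}. I would parametrize $\partial\Omega$ by $\theta\in[0,2\pi]$ through the support function $h(\theta)$, with radius of curvature $\rho=h+h''=\frac{\dd s}{\dd\theta}\ge0$. Strictly speaking, convexity gives only $\kappa\ge0$, so flat pieces of $\partial\Omega$ make $s(\theta)$ jump. I would first treat $\kappa>0$ and conclude by approximation, or note that the change of variables $\dd s=\rho\dd\theta$ remains valid with $\rho$ a measure.

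I would use the classical formulas
\[
L=\int_0^{2\pi}h\dd\theta,\qquad |\Omega|=\tfrac12\int_0^{2\pi}(h^2-h'^2)\dd\theta .
\]
Write $h=p+u$, where $u$ has zero mean. Then
\[
p^2-R^2=\frac1{2\pi}\int_0^{2\pi}(u'^2-u^2)\dd\theta .
\]

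\emph{Step 2: express $\psi$ in $\theta$.} Regarding $\psi$ as a function of $\theta$ via $s=s(\theta)$, \eqref{eq:psi} gives
\[
\frac{\dd\psi}{\dd\theta}=1-\frac{\rho}{p}=-\frac{u+u''}{p}.
\]
Hence $p\psi=-(u'+U)+\mathrm{const}$, where $U$ is the periodic mean-zero primitive of $u$.

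The key trick concerns the weight. For any constant $a$, the function $\psi+a$ is $2\pi$-periodic in $\theta$, and $\rho=p\bigl(1-\frac{\dd\psi}{\dd\theta}\bigr)$. Therefore
\[
\int_0^L(\psi+a)^2\dd s=p\int_0^{2\pi}(\psi+a)^2\dd\theta-p\int_0^{2\pi}(\psi+a)^2\frac{\dd\psi}{\dd\theta}\dd\theta=p\int_0^{2\pi}(\psi+a)^2\dd\theta,
\]
because the last integrand is an exact derivative of a periodic function. This removes the nonlinear weight $\rho$ entirely.

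Next, the choice of $c$ in \eqref{eq:psi} makes $\psi$ mean-zero with respect to $\dd s$. So $C$ is the minimum of $\frac p{2\pi}\int_0^L(\psi+a)^2\dd s$ over $a\in\R$. Choosing $a$ so that $\psi+a$ has zero $\theta$-mean, we obtain
\[
C\le\frac1{2\pi}\int_0^{2\pi}\bigl(u'+U\bigr)^2\dd\theta .
\]

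\emph{Step 3: Fourier comparison.} Expand $u=\sum_{|n|\ge1}u_ne^{\ii n\theta}$. The coefficients of $u'+U$ are $\ii\bigl(n-\frac1n\bigr)u_n$. The required inequality
\[
\int_0^{2\pi}(u'+U)^2\dd\theta\le\int_0^{2\pi}(u'^2-u^2)\dd\theta
\]
then reduces, mode by mode, to
\[
\frac{(n^2-1)^2}{n^2}\le n^2-1,\qquad |n|\ge1 .
\]
This holds trivially, with equality for $n=\pm1$, where both sides vanish. This proves \eqref{eq:variance}.

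I expect the main obstacle to be Step 2. In the arc-length variable, $\int\psi^2\dd s$ carries the weight $\rho$ and looks cubic in $u$. The observation that $\int\psi^2\frac{\dd\psi}{\dd\theta}\dd\theta=0$ is what linearizes the problem. The other delicate point is making sure the normalization constant $c$ only helps, which is handled by the minimization over $a$.
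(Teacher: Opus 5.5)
Your proof is correct and follows essentially the paper's route: you pass to the normal-angle parametrization (first for $\kappa>0$, then by approximation), use the exact-derivative identity $\int_0^L\psi^2\dd s=p\int_0^{2\pi}\wh\psi^{\,2}\dd\vartheta$ to remove the weight $\rho$, and compare Fourier multipliers mode by mode; the paper's comparison $1/n^2\le1/(n^2-1)$ in the coefficients of $\rho$ is exactly your $(n^2-1)^2/n^2\le n^2-1$ in the coefficients of $h=p+u$, since $\rho-p=u+u''$. The differences are cosmetic: you expand the support function instead of $\rho$ (so the closure condition that kills the first harmonics is automatic), you use $|\Omega|=\frac12\int(h^2-h'^2)$ in place of $\frac12\int h\rho$, and you handle the centering constant by minimizing over $a$ rather than by showing, as the paper does, that arclength centering already gives $\int_0^{2\pi}\wh\psi\dd\vartheta=0$.
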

\begin{proof}
First assume $\kappa>0$. We use the normal angle $\vartheta\in[0,2\pi)$ as a boundary
coordinate, that is, the normal vector to the boundary of $\Omg$ at the point with the normal angle coordinate $\vartheta$ is given by $\nu = e^{\ii\vartheta}$ and thus this new coordinate is connected via the arc-length coordinate by the identity $\vartheta = \theta(s)$. For strictly convex domains the normal angle coordinate uniquely determines the point on the boundary. We will denote by $\wh\psi\colon[0,2\pi]\arr\dR$ the function $\psi$ viewed as function of the normal angle parameter and defined through the identity $\wh\psi(\tt(s)) =\psi(s)$ for all $s\in[0,L]$.
Let us also introduce the function
\[
\rho(\vartheta) :=\frac{\dd s}{\dd\vartheta},
\]
that is the derivative of the arc-length coordinate with respect to normal angle coordinate.
Closure and smoothness of the curve $\p\Omg$
give
\[
 \int_0^{2\pi}\rho(\vartheta)e^{\ii\vartheta}\dd\vartheta = \int_0^{L}e^{\ii\tt(s)}\dd s = 0.
\]
Since $\int_0^{2\pi}\rho(\vartheta)\dd\vartheta=L$, the Fourier expansion of $\rho$ has the form
\begin{equation}\label{eq:rho-fourier}
 \rho(\vartheta)=p+\sum_{n\ge2}
       \bigl(a_n\cos n\vartheta+b_n\sin n\vartheta\bigr),
\end{equation}
with real coefficients $(a_n)_{n\ge 2}$ and $(b_n)_{n\ge 2}$.
The first harmonics vanish by the preceding closure identity.

Differentiating $\wh\psi$ with respect to $\vartheta$ we get
\begin{equation}\label{eq:psi-theta}
 \frac{\dd\wh\psi}{\dd\vartheta}=1-\frac{\rho}{p}.
\end{equation}
The arclength centering in \eqref{eq:psi} is also angular centering, since
\[
 0=\int_0^{2\pi}\wh\psi(\vartheta)\rho(\vartheta)\dd\vartheta
   =p\int_0^{2\pi}\wh\psi(\vartheta)\dd\vartheta
       -\frac p2\int_0^{2\pi}(\wh\psi^2)'(\vartheta)\dd\vartheta
   =p\int_0^{2\pi}\wh\psi(\vartheta)\dd\vartheta,
\]
where we used that $\wh\psi(0) = \wh\psi(2\pi)$.
Similarly,
\begin{equation}\label{eq:variance-measures}
 \int_0^L\psi^2(s)\dd s
 =p\int_0^{2\pi}\wh\psi^2(\vartheta)\dd\vartheta
       -\frac p3\int_0^{2\pi}(\wh\psi^3(\vartheta))'\dd\vartheta
 =p\int_0^{2\pi}\wh\psi^2(\vartheta)\dd\vartheta.
\end{equation}
Integrating \eqref{eq:rho-fourier} in \eqref{eq:psi-theta}, with zero
constant term, yields
\begin{equation}\label{eq:psi-fourier}
 \wh\psi(\vartheta)=\frac1p\sum_{n\ge2}
       \left(-\frac{a_n}{n}\sin n\vartheta
                    +\frac{b_n}{n}\cos n\vartheta\right).
\end{equation}
Parseval's identity and \eqref{eq:variance-measures} therefore imply
\begin{equation}\label{eq:C-fourier}
 C=\frac{p^2}{2\pi}\int_0^{2\pi}\wh\psi^2(\vartheta)\dd\vartheta
   =\frac12\sum_{n\ge2}\frac{a_n^2+b_n^2}{n^2}.
\end{equation}
To compute the area, let 
\[
h(\vartheta) := \sup_{x\in\Omega}\left\langle \Big(\begin{smallmatrix}\cos\vartheta\\ \sin\vartheta\end{smallmatrix}\Big),x\right\rangle_{\dR^2}
\]
 be the support function. Then according to~\cite{A98} we have
$h+h''=\rho$ and
\begin{equation}\label{eq:support-area}
 |\Omega|=\frac12\int_0^{2\pi}h(\vartheta)\rho(\vartheta)\dd\vartheta.
\end{equation}
It follows from $h+h'' = \rho$ that the constant Fourier coefficient of $h$ is $p$ and its coefficients for $n\ge2$ are
$a_n/(1-n^2)$ and $b_n/(1-n^2)$. Therefore
\begin{equation}\label{eq:deficit-fourier}
 R^2=p^2-\frac12\sum_{n\ge2}\frac{a_n^2+b_n^2}{n^2-1}.
\end{equation}
Comparison of \eqref{eq:C-fourier} and \eqref{eq:deficit-fourier} proves
\eqref{eq:variance} when $\kappa>0$. 

We now remove the positive-curvature restriction via approximating a general convex domain by strictly convex domains. Without loss of generality we can assume that $0\in\Omg$.
If $r\colon\dS^1\arr\dR_+$ parametrizes the boundary of $\Omega$ in polar coordinates as $(r(\varphi)\cos\varphi, r(\varphi)\sin\varphi)$ with $\varphi\in[0,2\pi)$, then
we consider the family of domains $\Omg_\eps$ with $\eps> 0$ parameterized in polar coordinates by the mapping
\[
 r_\varepsilon(\varphi)
 =\bigl(r(\varphi)^{-2}+\varepsilon\bigr)^{-1/2},\qquad \eps > 0.
\]
It is not difficult to show that the domains $\Omg_\eps$ are strictly convex.
Indeed,
set
\[
q(\varphi):=1+\varepsilon r(\varphi)^2,
\qquad
r_\varepsilon(\varphi)
=\frac{r(\varphi)}{\sqrt{q(\varphi)}}.
\]
By convexity of $\Omega$,
\[
A(\varphi):=r^2+2(r')^2-rr''
=\kappa(\varphi)
\bigl(r^2+(r')^2\bigr)^{3/2}
\ge 0.
\]
Differentiation of $r_\eps$ and application of the polar
curvature formula show that the curvature of $\p\Omg_\eps$ is given by
\[
	\kappa_\varepsilon(\varphi)
	=
	\frac{r_\varepsilon^2
		+2(r_\varepsilon')^2
		-r_\varepsilon r_\varepsilon''}
	{\bigl(r_\varepsilon^2
		+(r_\varepsilon')^2\bigr)^{3/2}}
	=
	\frac{
		q^{3/2}\left[
		qA+\varepsilon r^2
		\bigl(r^2q+(r')^2\bigr)
		\right]}
	{\bigl(r^2q^2+(r')^2\bigr)^{3/2}}
	>0.
\]
Indeed, $A\ge0$, $r>0$, $q>0$, and $\varepsilon>0$,
so the numerator and denominator are strictly positive.
Finally, we observe that $r_\eps\arr r$ in $C^\infty(\dS^1)$ as $\eps\arr0^+$ and thus the perimeters, areas and centered angle variances converge:
$p_\varepsilon\to p$, $R_\varepsilon\to R$, and $C_\varepsilon\to C$.
Passing to the limit proves \eqref{eq:variance} for $\kappa\ge0$.
\end{proof}

We record some useful identities for later purposes. 
\begin{lemma}\label{lem:cancellations}
With the preceding notation,
\begin{align}
 &\int_0^L e^{\ii\theta}\dd s=0,
 \qquad \int_0^L\kappa e^{\ii\theta}\dd s=0,
 \qquad \int_0^L\kappa\psi e^{\ii\theta}\dd s=0,
 \label{eq:closure-cancellations}\\
 &\int_0^L\kappa\psi^2\dd s
   =\frac1p\int_0^L\psi^2\dd s
   =\frac{2\pi C}{p^2}.
 \label{eq:weighted-variance}
\end{align}
\end{lemma}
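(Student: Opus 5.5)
The first identity in \eqref{eq:closure-cancellations} is the closure of the curve. Since $\gamma'=\tau=\ii e^{\ii\theta}$ and $\gamma$ is $L$-periodic, we have
\[
\int_0^L e^{\ii\theta}\dd s=-\ii\int_0^L\gamma'(s)\dd s=-\ii\bigl(\gamma(L)-\gamma(0)\bigr)=0.
\]
For the second identity we use $\theta'=\kappa$ from \eqref{eq:geometry}. Then $\kappa e^{\ii\theta}=-\ii\,(e^{\ii\theta})'$, and the integral over a period vanishes because $e^{\ii\theta(L)}=e^{2\pi\ii}=e^{\ii\theta(0)}$.

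For the third identity I integrate by parts, writing $\kappa\psi e^{\ii\theta}=-\ii\,\psi\,(e^{\ii\theta})'$. The function $\psi$ is $L$-periodic, since $\theta(L)-L/p=2\pi-2\pi=0$ by \eqref{eq:pR} and \eqref{eq:geometry}. So the boundary term drops and
\[
\int_0^L\kappa\psi e^{\ii\theta}\dd s=\ii\int_0^L\psi' e^{\ii\theta}\dd s
=\ii\int_0^L\Bigl(\kappa-\frac1p\Bigr)e^{\ii\theta}\dd s=0,
\]
where I used \eqref{eq:psi-derivative} and then the first two identities.

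For \eqref{eq:weighted-variance} I follow the same pattern. I write $\kappa\psi^2=(\psi'+1/p)\psi^2$ using \eqref{eq:psi-derivative}. The term $\psi'\psi^2=\tfrac13(\psi^3)'$ integrates to zero by periodicity of $\psi$, which leaves $\frac1p\int_0^L\psi^2\dd s$. The last equality $\frac1p\int_0^L\psi^2\dd s=\frac{2\pi C}{p^2}$ is just the definition $C=\frac p{2\pi}\int_0^L\psi^2\dd s$ rearranged.

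There is no real obstacle here. The only point needing care is the periodicity of $\psi$, which lets every boundary term in the integrations by parts vanish. It comes from $\theta(L)=2\pi$, i.e. from convexity together with the rotation index of a simple closed curve, combined with $L=2\pi p$.
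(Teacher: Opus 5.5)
Your proof is correct and follows the paper's argument step for step. You use closure of $\gamma$, then $(e^{\ii\theta})'=\ii\kappa e^{\ii\theta}$ with periodicity, then integration by parts against the periodic $\psi$ with $\psi'=\kappa-1/p$, and finally the splitting $\kappa\psi^2=\frac1p\psi^2+\frac13(\psi^3)'$. (A minor aside: $\theta(L)=2\pi$ needs only that $\partial\Omega$ is a simple closed curve oriented counterclockwise, by the rotation index theorem; convexity is not required for it.)
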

\begin{proof}
The first identity follows from $\int_0^L\gamma'\dd s=0$. The
second identity follows from $(e^{\ii\theta})'=\ii\kappa e^{\ii\theta}$ and
periodicity. For the third, integration by parts gives
\[
 \int_0^L\kappa\psi e^{\ii\theta}\dd s
 =\frac1\ii\int_0^L\psi(e^{\ii\theta})'\dd s
 =\ii\int_0^L(\kappa-1/p)e^{\ii\theta}\dd s=0.
\]
Finally, use \eqref{eq:psi-derivative} to write
\[
 \int_0^L\kappa\psi^2\dd s
 =\frac1p\int_0^L\psi^2\dd s
       +\frac13\int_0^L(\psi^3)'\dd s,
\]
which proves \eqref{eq:weighted-variance}.
\end{proof}

\section{Parallel coordinates and a distance-dependent phase}
\label{sec:trial}
Convexity makes
\begin{equation}\label{eq:parallel-map}
 \Phi(s,t)=\gamma(s)+t\nu(s),
 \qquad (s,t)\in(\R/L\mathbb Z)\times(0,\infty),
\end{equation}
a global coordinate map onto $\Omega^{\ext}$. Indeed, every 
point in $\Omg^\ext$ has a unique nearest point on $\p\Omg$ and the
connecting segment is normal to the boundary. 
By \eqref{eq:geometry},
\[
 \p_s\Phi=(1+t\kappa)\tau,\qquad \p_t\Phi=\nu.
\]
The absolute value of the determinant of the Jacobian of $\Phi$ given by $w(s,t)=1+t\kappa(s)$ is non-vanishing. The nearest-point property makes
$\Phi$ bijective and thus $\Phi$ is a
global diffeomorphism, which justifies that the coordinates $(s,t)$ on $\Omg^\ext$ are well defined.
Following the lines of~\cite{KL24} we have
\begin{align}
 \|u\|^2_{L^2(\Omg^\ext)}&=\int_0^\infty\int_0^L|u(s,t)|^2w(s,t)\dd s\dd t,
 \label{eq:parallel-mass}\\
 \qf_\alpha^{\Omg^\ext}[u,v]
 &=\int_0^\infty\int_0^L
       \left(\p_t u\overline{\p_t v}\,w
              +\frac{\p_su\overline{\p_sv}}w\right)\dd s\dd t
      +\alpha\int_0^Lu(s,0)\overline{v(s,0)}\dd s.
 \label{eq:parallel-form}
\end{align}
 Total-curvature identity gives
\begin{equation}\label{eq:w-integral}
 \int_0^Lw(s,t)\dd s=L+2\pi t=2\pi P_t,
\end{equation}
where $P_t = p+t$.

Define the phase
\begin{equation}\label{eq:phase}
 \chi(s,t)=\frac{s}{p}+\frac{t}{p+t}\psi(s)
          =\theta(s)+c-\frac p{p+t}\psi(s).
\end{equation}
It satisfies
\begin{equation}\label{eq:phase-derivatives}
 \p_s\chi=\frac{w}{P_t},\qquad
 \p_t\chi=\frac{p\psi}{P_t^2},\qquad
 \chi(s+L,t)=\chi(s,t)+2\pi.
\end{equation}
Thus $\chi$ is normalized arclength on the parallel curve, with an
allowed $t$-dependent choice of angular origin. Two trial functions are
\begin{equation}\label{eq:trials}
 u_0(s,t)=g(t),\qquad u_1(s,t)=g(t)e^{\ii\chi(s,t)},
\end{equation}
where $g$ is as in~\eqref{eq:g}.
The periodicity of $u_0$ and $u_1$ with respect to  variable $s$ makes them well defined on the exterior domain.

Changing variables at fixed $t$ yields
\begin{equation}\label{eq:phase-mean}
 M(t):=\int_0^Le^{\ii\chi(s,t)}w(s,t)\dd s
      =P_t\int_{\chi(0,t)}^{\chi(0,t)+2\pi}e^{\ii\zeta}\dd\zeta=0.
\end{equation}
Consequently
\begin{equation}\label{eq:orthogonality}
 \ip{u_1}{u_0}_{L^2(\Omega^{\ext})}=0,
 \qquad \int_{\partial\Omega}u_1\overline{u_0}\dd s=0.
\end{equation}
The second identity uses $\chi(s,0)=s/p$.
Both functions have the same nonzero squared $L^2$-norm
\begin{equation}\label{eq:common-norm}
 N:=\|u_0\|^2_{L^2(\Omg^\ext)}=\|u_1\|^2_{L^2(\Omg^\ext)}
   =2\pi\int_0^\infty g(t)^2P_t\dd t.
\end{equation}

The derivatives of $u_1$ are
\[
 \p_tu_1=(g'+\ii g\p_t\chi)e^{\ii\chi},\qquad
 \p_su_1=\ii g(\p_s\chi) e^{\ii\chi}.
\]
Since $g,g',\p_t\chi$ are real we get
$|g'+\ii g\p_t\chi|^2 = |g'|^2+|g(\p_t\chi)|^2$. The tangential contribution is
\begin{equation}\label{eq:tangent-energy}
 \int_0^L\frac{|\p_s u_1|^2}{w}\dd s
 =\frac{g^2}{P_t^2}\int_0^Lw\dd s
 =\frac{2\pi g^2}{P_t}.
\end{equation}
By \eqref{eq:weighted-variance},
\begin{equation}\label{eq:psi-w}
 \int_0^L\psi^2w\dd s
 =\left(1+\frac tp\right)\int_0^L\psi^2\dd s
 =\frac{2\pi C P_t}{p^2}.
\end{equation}
The radial phase contribution is therefore
\begin{equation}\label{eq:phase-energy}
 \int_0^Lg^2(\p_t\chi)^2w\dd s
 =\frac{p^2g^2}{P_t^4}\int_0^L\psi^2w\dd s
 =\frac{2\pi Cg^2}{P_t^3}.
\end{equation}
Combining these formulas gives
\begin{align}
 \frac{\qf_\alpha^{\Omg^\ext}[u_0]}{2\pi}
 &=\int_0^\infty g'^2P_t\dd t+\alpha p g(0)^2,
 \label{eq:radial-energy}\\
 \frac{\qf_\alpha^{\Omg^\ext}[u_1]}{2\pi}
 &=\int_0^\infty\left(g'^2P_t+\frac{g^2}{P_t}
                              +\frac{Cg^2}{P_t^3}\right)\dd t
                  +\alpha p g(0)^2.
 \label{eq:angular-energy}
\end{align}
All integrals are finite by \eqref{eq:g-properties}, since $\psi$ is
bounded and $w\ge1$. These computations in particular establish
$u_0,u_1\in H^1(\Omega^{\ext})$.

\section{Diagonal and mixed estimates}
\label{sec:estimates}

For the remainder of the proof define the shifted form
\begin{equation}\label{eq:shifted}
 \hf[u,v]=\qf_\alpha^{\Omg^\ext}[u,v]-\lambda(u,v)_{L^2(\Omg^\ext)},
 \end{equation}
 where $\lambda = \lambda_2^{\aa}(\cB^\ext_R) < 0$ as before. 
 The assumption $\aa < -\frac{1}{R}$ is imposed.
Assume $\Omega$ is not a disk, so $p>R$.

\subsection{The diagonal estimates}

The area bound \eqref{eq:variance} implies the pointwise inequality
\begin{equation}\label{eq:potential}
 \frac1{(p+t)^2}+\frac C{(p+t)^4}
 \le\frac1{(R+t)^2},\qquad t\ge0.
\end{equation}
Indeed, writing $A=p+t$ and $r=R+t$, we have
$C\le p^2-R^2\le A^2-r^2$, and
\[
 \frac C{A^4}
 \le\frac{A^2-r^2}{A^4}
 \le\frac{A^2-r^2}{A^2r^2}
 =\frac1{r^2}-\frac1{A^2}.
\]
Use \eqref{eq:potential} in \eqref{eq:angular-energy}, subtract
$\frac{\lambda N}{2\pi}$, and split $p+t=(R+t)+(p-R)$. The weighted identity
\eqref{eq:radial-weighted} cancels the first part, leaving
\begin{align}
 \frac{\frh[u_1]}{2\pi}
 &\le(p-R)\left[
       \int_0^\infty\left(g'^2+\frac{g^2}{(R+t)^2}-\lambda g^2\right)\dd t
                    +\alpha g(0)^2\right]\notag\\
 &=(p-R)\int_0^\infty\frac{gg'}{R+t}\dd t.
 \label{eq:diagonal-bound}
\end{align}
Define
\begin{equation}\label{eq:DH}
 D:=2\pi(p-R)\int_0^\infty\frac{-gg'}{R+t}\dd t>0,
 \qquad
 H:=2\pi\int_0^\infty g^2
                \left(\frac1{p+t}+\frac C{(p+t)^3}\right)\dd t>0.
\end{equation}
Writing $T=\frh[u_1]$, the exact energy formulas and the common norm yield
\begin{equation}\label{eq:diagonals}
 T\le-D<0,\qquad \frh[u_0]=T-H.
\end{equation}

\subsection{A quadratic cancellation estimate}

The gradient cross term need not vanish. Its control is the step that
allows a general convex obstacle.

\begin{lemma}\label{lem:J}
For $t\ge0$, define
\begin{equation}\label{eq:J-definition}
 J(t)=\int_0^L\kappa(s)e^{\ii\chi(s,t)}\dd s.
\end{equation}
Then
\begin{equation}\label{eq:J-bound}
 |J(t)|\le\frac{\pi C}{(p+t)^2}.
\end{equation}
\end{lemma}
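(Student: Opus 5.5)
The bound should follow from a second-order Taylor expansion of the phase, combined with the two cancellations recorded in Lemma~\ref{lem:cancellations}. By \eqref{eq:phase}, $\chi(s,t)=\theta(s)+c-\varepsilon_t\psi(s)$, where $\varepsilon_t:=p/(p+t)\in(0,1]$. I will therefore write
\[
 J(t)=e^{\ii c}\int_0^L\kappa(s)e^{\ii\theta(s)}e^{-\ii\varepsilon_t\psi(s)}\dd s .
\]

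\textbf{Subtracting the first-order terms.} By \eqref{eq:closure-cancellations}, both $\int_0^L\kappa e^{\ii\theta}\dd s$ and $\int_0^L\kappa\psi e^{\ii\theta}\dd s$ vanish. So I may subtract the zeroth- and first-order terms of the exponential for free:
\[
 J(t)=e^{\ii c}\int_0^L\kappa e^{\ii\theta}\Bigl(e^{-\ii\varepsilon_t\psi}-1+\ii\varepsilon_t\psi\Bigr)\dd s .
\]

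\textbf{Pointwise estimate.} For real $x$ one has
\[
 e^{-\ii x}-1+\ii x=-\int_0^x(x-y)e^{-\ii y}\dd y ,
\]
which gives $|e^{-\ii x}-1+\ii x|\le x^2/2$. Convexity gives $\kappa\ge0$, so I can move the absolute value inside the integral without losing the weight. This yields
\[
 |J(t)|\le\frac{\varepsilon_t^2}{2}\int_0^L\kappa\psi^2\dd s .
\]

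\textbf{Evaluating the weighted variance.} By \eqref{eq:weighted-variance}, $\int_0^L\kappa\psi^2\dd s=2\pi C/p^2$. Substituting,
\[
 |J(t)|\le\frac{p^2}{2(p+t)^2}\cdot\frac{2\pi C}{p^2}=\frac{\pi C}{(p+t)^2},
\]
which is \eqref{eq:J-bound}.

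The only genuinely nontrivial input is that the first-order term cancels. This relies on the third identity in \eqref{eq:closure-cancellations}, which holds because $\psi'=\kappa-1/p$ and the closure relation $\int_0^L e^{\ii\theta}\dd s=0$. The sign condition $\kappa\ge0$ is also essential, since it is what lets the absolute value pass inside the integral.
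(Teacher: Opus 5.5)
Your proof is correct and is essentially the paper's argument: you use \eqref{eq:closure-cancellations} to subtract the constant and linear terms, bound the Taylor remainder by $x^2/2$, use $\kappa\ge0$ to move the modulus inside the integral, and evaluate $\int_0^L\kappa\psi^2\dd s$ via \eqref{eq:weighted-variance}. The only difference is cosmetic: you write the remainder as $-\int_0^x(x-y)e^{-\ii y}\dd y$, whereas the paper uses the rescaled form $-x^2\int_0^1(1-r)e^{-\ii rx}\dd r$.
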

\begin{proof}
Let $a_t=p/(p+t)$. By \eqref{eq:phase},
$e^{\ii\chi}=e^{\ii c}e^{\ii\theta}e^{-\ii a_t\psi}$. The cancellations in
\eqref{eq:closure-cancellations} remove the constant and linear terms:
\begin{equation}\label{eq:J-remainder}
 J(t)=e^{\ii c}\int_0^L\kappa e^{\ii\theta}
        \bigl(e^{-\ii a_t\psi}-1+\ii a_t\psi\bigr)\dd s.
\end{equation}
For real $x$, Taylor's integral remainder gives
\[
 e^{-\ii x}-1+\ii x
 =-x^2\int_0^1(1-r)e^{-\ii r x}\dd r,
 \qquad |e^{-\ii x}-1+\ii x|\le x^2/2.
\]
Since $\kappa\ge0$, we can apply this estimate under the integral in
\eqref{eq:J-remainder}. Using \eqref{eq:weighted-variance},
\[
 |J(t)|\le\frac{a_t^2}{2}\int_0^L\kappa\psi^2\dd s
 =\frac{p^2}{2(p+t)^2}\frac{2\pi C}{p^2}
 =\frac{\pi C}{(p+t)^2}.
\]
This argument is valid directly when $\kappa$ vanishes at some points of the boundary, because no inverse
normal-angle parametrization has been used.
\end{proof}

\subsection{The off-diagonal form}

With the convention that the form is linear in its first argument, set
$b=\hf[u_1,u_0]$. Both the $L^2$ and Robin boundary terms vanish by
\eqref{eq:orthogonality}. Since $\p_su_0=0$, only the $t$-derivative
term remains:
\begin{align}
 b
 &=\int_0^\infty\int_0^L
       (g'+\ii g(\p_t\chi))e^{\ii\chi}g'w\dd s\dd t\notag\\
 &=\ii\int_0^\infty gg'
                 \int_0^L(\p_t\chi) e^{\ii\chi}w\dd s\dd t.
 \label{eq:cross-first}
\end{align}
Differentiating $M(t)=0$ in \eqref{eq:phase-mean} gives
\begin{equation}\label{eq:M-derivative}
 0=M'(t)=\ii\int_0^L(\p_t\chi) e^{\ii\chi}w\dd s+J(t).
\end{equation}
Differentiation under this finite $s$ integral is justified by the
regularity of the phase and the boundary. Consequently
\begin{equation}\label{eq:cross-exact}
 b=-\int_0^\infty gg'J(t)\dd t.
\end{equation}
The integral is absolutely convergent. Lemma~\ref{lem:J} and
$-gg'>0$ imply
\begin{equation}\label{eq:cross-bound-first}
 |b|\le\pi C\int_0^\infty\frac{-gg'}{(p+t)^2}\dd t.
\end{equation}
The remaining comparison is elementary. Since $p\ge R$ and $t\ge0$,
\begin{align}
 2(p+t)^2-(p+R)(R+t)
 &=(p-R)(2p+R)+(3p-R)t+2t^2\notag\\
 &\ge0.
 \label{eq:algebra}
\end{align}
Together with $C\le(p-R)(p+R)$, this yields
\begin{equation}\label{eq:C-comparison}
 \frac C{(p+t)^2}\le\frac{2(p-R)}{R+t}.
\end{equation}
Combining \eqref{eq:cross-bound-first} and \eqref{eq:C-comparison} proves
\begin{equation}\label{eq:cross-final}
 |b|\le2\pi(p-R)\int_0^\infty\frac{-gg'}{R+t}\dd t
      =D\le-T.
\end{equation}

\section{Completion of the variational proof}
\label{sec:proof}

\begin{proof}[Proof of Theorem~\ref{thm:main}]
If $\Omega$ is a disk, translation invariance gives equality. Suppose
that $\Omega$ is not a disk. When $\alpha\ge-1/R$,
\eqref{eq:disk-threshold} and the convention for min--max levels give
\[
 \lambda_2^\alpha(\Omega^{\ext})\le0
 =\lambda_2^\alpha(\cB_R^{\ext}).
\]
It remains to treat $\alpha<-1/R$, so all the preceding constructions
apply.

Consider $\mathcal L=\Span_{\C}\{u_0,u_1\}$. The functions are nonzero
and orthogonal by \eqref{eq:orthogonality}--\eqref{eq:common-norm}, so
$\dim_{\C}\mathcal L=2$. For $z_0,z_1\in\C$,
\begin{align}
\frh[z_0u_0+z_1u_1]
 &=(T-H)|z_0|^2+T|z_1|^2
        +2\Rea\bigl(z_1\overline{z_0}b\bigr)\notag\\
 &\le(T-H)|z_0|^2+T|z_1|^2+2|b||z_0||z_1|\notag\\
 &\le T(|z_0|-|z_1|)^2-H|z_0|^2.
 \label{eq:negative-space}
\end{align}
Here the last step is \eqref{eq:cross-final}. Since $T<0$ and $H>0$,
the final expression is strictly negative whenever $(z_0,z_1)\ne(0,0)$:
if $z_0\ne0$, the second term is negative; if $z_0=0$ and $z_1\ne0$,
the first term is negative.

The unit sphere of this finite-dimensional trial space is compact. Thus
strict negativity gives
\[
 \sup_{0\ne u\in\mathcal L}\frac{\qf_\alpha^{\Omg^\ext}[u]}{\|u\|^2_{L^2(\Omg^\ext)}}
 <\lambda.
\]
Applying \eqref{eq:minmax} proves
\[
 \lambda_2^\alpha(\Omega^{\ext})
 <\lambda=\lambda_2^\alpha(\cB_R^{\ext}),
\]
which establishes the assertion and its equality statement.
\end{proof}

\begin{remark}[Where convexity enters]
Convexity is used in three places: the global exterior parallel
coordinates, the area--variance inequality, and the Taylor estimate with
the nonnegative measure $\kappa\dd s$. The proof does not invoke an
orthogonality condition against the unknown ground state. Instead, it
controls the whole shifted form on a two-dimensional trial space.
\end{remark}

\begin{remark}[Relation to the earlier parallel-coordinate method]
The trial functions in \cite[Section~4]{KL24} use complexified boundary tangent as
their angular factor, and their mixed terms vanish exactly. In the
present construction the phase varies with $t$, producing the term
$C/(p+t)^4$ in the effective angular potential. The mixed term is
generally nonzero, but its quadratic estimate \eqref{eq:J-bound} is
small enough for \eqref{eq:negative-space}. The same disk radial profile
$g$ is used in both trial functions.
\end{remark}

\begin{remark}[Scope and threshold comparisons]
Theorem~\ref{thm:main} addresses the fixed-area part of
\cite[Conjecture~1.2]{KL24} within the convex $C^\infty$-smooth domains. Its argument does not establish the
fixed-perimeter comparison,  which is a stronger claim and does not follow by scaling. Exterior Steklov inequalities and their
connection with the Robin threshold are developed in
\cite{BGGLP26}. Those results provide complementary comparisons for 
the critical coupling constant under fixed perimeter constraint. 
\end{remark}

\subsection*{Acknowledgments}
The first author was supported  
by the grant no. 26-21940S of the Czech Science Foundation.

\subsection*{AI usage disclosure}
We disclose the use of ChatGPT 6 Astra. The proof of the main result of the present manuscript was developed after a few prompts asking to generalize the method of our paper~\cite{KL24} in order to prove the conjecture stated in the same paper. We have checked and rewritten all AI-assisted arguments and we take responsibility for their correctness.

\end{document}